\newcommand{\genlegendre}[4]{%
	\genfrac{(}{)}{}{#1}{#3}{#4}%
	\if\relax\detokenize{#2}\relax\else_{\!#2}\fi
}
\newtheorem{theorem}{Theorem}[section]
\newtheorem{lemma}[theorem]{Lemma}
\theoremstyle{definition}
\newtheorem{proposition}[theorem]{Proposition}
\theoremstyle{remark}
\newtheorem{remark}[theorem]{Remark}
\newcommand{\longcomment}[1]{}
\DeclareMathOperator{\im}{Im}
\newcommand{\GalQ}{\operatorname{Gal}(\overline{\mathbb{Q}}/\mathbb{Q})}
\newcommand{\GalK}{\operatorname{Gal}(\overline{K}/K)}
\newcommand{\GalKv}{\operatorname{Gal}(\overline{K_v}/K_v)}
\newcommand{\whichbold}[1]{\mathbb{#1}} 
\renewcommand{\div}{\mathrm{div}}
\newcommand{\Z}{\whichbold{Z}}
\newcommand{\Q}{\whichbold{Q}}
\newcommand{\C}{\whichbold{C}}
\newcommand{\Qp}{\whichbold{Q}_{p}}
\numberwithin{equation}{section}
\author{Lukas Novak}
\address{Department of Mathematics\\ 
	University of Zagreb\\
	 Bijeni\v{c}ka cesta 30\\
	  10000 Zagreb\\
	  Croatia}
\email{lukas.novak@math.hr}
\title[Twists arising from torsion points]{Twists arising from torsion points}
\renewcommand{\tocsection}[3]{%
	\indentlabel{\@ifnotempty{#2}{\bfseries\ignorespaces#1 #2\quad}}\bfseries#3}
\renewcommand{\tocsubsection}[3]{%
	\indentlabel{\@ifnotempty{#2}{\ignorespaces#1 #2\quad}}#3}
\renewcommand\csname r@tocindent0\endcsname{0pt}
\def\l@subsection{\@tocline{2}{0pt}{2.5pc}{5pc}{}}
\begin{document}

\begin{abstract}
    Let $p$ be a prime number, $K$ a number field that contains the $p$-th root of unity $\zeta_p$, $d$ a $p$-power-free integer and $L=K(\sqrt[p]{d})$. Let $E/K$ be an elliptic curve with full $p$-torsion and $S,T \in E(K)[p]$ be the generators. 

    Define the cocycle $\xi_d : \operatorname{Gal}(\overline{K}/K) \to E$ by
    \[
        \xi_d (\sigma)=
        \begin{cases}
            O, & \text{if } \sigma(\sqrt[p]{d})=\sqrt[p]{d}, \\
            kS, & \text{if } \sigma(\sqrt[p]{d})=\zeta_p^k\sqrt[p]{d},
        \end{cases}
    \]
    and denote by $H_S^d$ the twist of $E$ corresponding to the cocycle $\xi_d$.

    In this paper we construct generators $z$ and $w$ of the function field $K(H_S^d)$ and give a model of the twist 
    \[
        H_S^d\,:\, \alpha_{1}z^p+\alpha_2z^{p-2}w+\dotso+\alpha_{\frac{p+1}{2}}zw^{\frac{p-1}{2}}+\beta w^p+\gamma=0.
    \]
    We also obtain that the twist $H_S^d$ is everywhere locally
    solvable only for finitely many integers~$d$.
\end{abstract}

\maketitle

\tableofcontents

\section{Introduction}\label{sec:1}

Let $C$ be a smooth projective curve over a number field $K$. A \emph{twist} of $C/K$ is a curve $C'/K$ that is isomorphic to $C$ over $\overline{K}$. Furthermore,  we treat two twists as equivalent if they are isomorphic over $K$. The set of twists of $C/K$, modulo $K$-isomorphism, is denoted by $\operatorname{Twist}(C/K)$.

It turns out that the set $\operatorname{Twist}(C/K)$ can be identified with a certain cohomology set.
Namely, there is a one-to-one correspondence between elements in $\operatorname{Twist}(C/K)$ and the elements of the cohomology set $H^1\left(\GalK,\operatorname{Isom}(C)\right)$ where $\operatorname{Isom}(C)$ is the group of all $\overline{K}$-isomorphisms from $C$ to itself.
The proof of this result can be found for example in \cite{silverman2009arithmetic} (Chapter X.2, Theorem 2.2). Specially, this correspondence allows us to define twists of an elliptic curve $E/K$ via cocycles in $H^1\left(\GalK,\operatorname{Isom}(E)\right)$ and vice versa, where $\operatorname{Isom}(E)$ is the isomorphism group of $E$ viewed as a genus one curve. It is known that
\[
    \operatorname{Isom}(E) \cong \operatorname{Aut}(E,O) \ltimes E(\overline{K})
\]
where $\operatorname{Aut}(E,O)$ is the automorphism group of $E$ viewed as an elliptic curve.

One actively studied area for the twists of a curve is the study of local points, everywhere local solvability of these twist and the existence of rational points on the twists.  
The articles below demonstrate some interesting results and the progress made in this area.

Ozman in~\cite{MR2890545} was investigating the existence of local points on the quadratic twist of the modular curve $X_0(N)$ over $\Q$. She gave necessary and sufficient conditions for these quadratic twists to have $\Qp$-rational points over any completion $\Qp$ of $\Q$.

 Ozman continues to study local points on the twist of the modular curve $X_0(N)/\Q$ by a polyquadratic field $K=\Q(\sqrt{d_1},\dotso, \sqrt{d_k})$ in~\cite{MR3071815}. In her work, Ozman gives an algorithm to produce such twists that are everywhere locally solvable. After that, she produced infinitely many twists violating the Hasse principle (i.e.\ the twists that are everywhere locally solvable but do not have a rational point) and also gives an asymptotic formula for the number of such twists.

 Lorenzo Garc\'ia and  Vullers in\cite{MR4770972} inspect the local and global points of the twists of the Klein quartic (a smooth projective plane curve defined by the equation $X^3Y+Y^3Z+Z^3X=0$). The main result in their work provides families with (conjecturally infinitely many) twists of the Klein quartic that violate the Hasse principle.

Kazalicki in his recent work~\cite{MR4935708} studied the quadratic twists of a genus one quartic $H\,:\, y^2=(x^2-x-3)(x^2+2x-12)$ by the integer $d$ and their $2$-Selmer groups. In his work he partially answers the question for which primes $p$ the quadratic twist of $H$ by $\Q(\sqrt{p})$ has a rational point.

Following Kazalicki's work on studying quadratic twists of the genus one quartic $H$, Novak in~\cite{MR4902719} investigates the quadratic twists of genus one quartic curves defined by an irreducible and monic polynomial of degree four. He finds a general criterion for determining local solvability of this twist and also determines the asymptotic growth of the set of square-free integers for which the associated twist is everywhere locally solvable.

Çiperiani and Ozman in~\cite{ciperiani2015local} have studied twists of an elliptic curve $E/K$ defined in the following way:

For any quadratic extension $F=K(\sqrt{d})$ and any point $S\in E(K)$ they considered the cocycle $\xi_{F,S} \in H^1\left(\GalK, \operatorname{Aut}(E)\right)$ defined by
        \[
            \xi_{F,S} (\sigma)=
            \begin{cases}
                (-1,S), & \text{if } \sigma(\sqrt{d})=-\sqrt{d}, \\
                (1,O), & \text{otherwise } 
            \end{cases}
        \]

where $(\pm 1,S)$ sends a point $X \in E$ to the point $\pm X+S$.
Let $E_{F,S}$ denote the twist of $E$ corresponding to $\xi_{F,S}$. Note that $E_{F,S}$ is in general only a genus one curve and not necessarily an elliptic curve. 

In their work, Çiperiani and Ozman showed that this genus one curve $E_{F,S}$ has a $K$-rational point if and only if the point $S$ is in the image of the global trace map $tr_{F/K}$.

If the elliptic curve $E$ does not have complex multiplication, then the elements of $\operatorname{Isom}(E)$ are of the form $(-1,S)$ and $(1,S)$ for a point $S \in E$. Çiperiani and Ozman studied the twists corresponding to the generic element $(-1, S)$  and in this paper we will study the twists corresponding to the other generic element $(1, S)$ of the isomorphism group $\operatorname{Isom}(E)$. We will work in the following setting.

Let $p$ be a prime number, $K$ a number field that contains the $p$-th root of unity $\zeta_p$, $d$ a $p$-power-free integer (an integer that is not divisible by the $p$-th power of any prime number) and $L=K(\sqrt[p]{d})$.
Let $E/K$ be an elliptic curve with full $p$-torsion and $S,T \in E(K)[p]$ the generators of $E(K)[p]$. We define a cocycle $\xi_d : \GalK \to E$ by
\[
    \xi_d (\sigma)=
    \begin{cases}
        O, & \text{if } \sigma(\sqrt[p]{d})=\sqrt[p]{d}, \\
        kS, & \text{if } \sigma(\sqrt[p]{d})=\zeta_p^k\sqrt[p]{d}.
    \end{cases}
\]
Denote by $H_S^d$ the twist of $E$ corresponding to the cocycle $\xi_d$. We investigate the following questions.

\noindent \textbf{Questions:}
    \begin{itemize}
        \item Find a model for the twist $H_S^d$?

        \item  What can we say about the local solvability of $H_S^d$?
    \end{itemize}

In order to obtain a model for the twist $H_S^d$, we define functions $z,w \in \overline{K}(E)$ as
\begin{align*}
    \div{(z)} &= (T)+(T+S)+\dotso+(T+(p-1)S)-(O)-(S)-\dotso -((p-1)S),\\
    \div{(w)} &= (2T)+(2T+S)+\dotso+(2T+(p-1)S)-(O)-(S)-\dotso -((p-1)S).
\end{align*}
Note that such functions $z$ and $w$ do indeed exist because the degrees of the above divisors are $0$, $\displaystyle{\sum_{k=0}^{p-1}(T+kS) - \sum_{k=0}^{p-1}kS = O}$ and $\displaystyle{\sum_{k=0}^{p-1}(2T+kS) - \sum_{k=0}^{p-1}kS = O}$. Furthermore, the functions $z$ and $w$ are only well defined up to multiplication by a constant. In Section~\ref{sec:3} (see Proposition~\ref{prop_zw_def}) we show that there are such constants so that the functions $z$ and $w$ are in the function field $K(H_S^d)$.

With the introduction of this functions we are able to describe a model for the twist $H_S^d$.

\begin{theorem}\label{p-twist_model}
    The functions $z$, $w \in K(H_S^d)$ generate the function field $K(H_S^d)$ and give a model of the twist 
    \[
        H_S^d\,:\, \alpha_{1}z^p+\alpha_2z^{p-2}w+\dotso+\alpha_{\frac{p+1}{2}}zw^{\frac{p-1}{2}}+\beta w^p+\gamma=0
    \]
    where $\alpha_1$, $\alpha_2$, \ldots, $\alpha_{\frac{p+1}{2}}$, $\beta$ and $\gamma \in K$ are constants.
\end{theorem}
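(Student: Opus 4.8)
The plan is to exploit the automorphism of $H_S^d$ induced by translation by $S$ on $E$, together with the hypothesis $\zeta_p\in K$, to constrain the defining equation so tightly that only the displayed monomials can occur.

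By Proposition~\ref{prop_zw_def} we may assume $z,w\in K(H_S^d)$, so it remains to show that they generate $K(H_S^d)$ and to determine the relation between them. First note that, because the cocycle $\xi_d$ takes values in the group of translations and translations of $E$ commute with one another, translation by $S$, which I denote $\tau_S$, descends to an automorphism of $H_S^d$ defined over $K$ of order $p$: if $\phi\colon E\to H_S^d$ realizes the twist over $\overline K$, then $\phi\circ\tau_S\circ\phi^{-1}$ is fixed by $\GalK$. Since $\zeta_p\in K$, the induced map $\tau_S^{*}$ splits $K(H_S^d)$ into $p$ eigenspaces. Because $\div(z)$ and $\div(w)$ are visibly fixed by $\tau_S^{*}$ (the points $kS$, $T+kS$ and $2T+kS$ are only cyclically permuted), we get $z\circ\tau_S=\zeta_p^{a}z$ and $w\circ\tau_S=\zeta_p^{b}w$ for some residues $a,b$. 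Here $a\not\equiv0\pmod p$, for otherwise $z$ would descend to a function of degree one on the genus-one curve $E/\langle S\rangle$, which is impossible; and one computes $b\equiv 2a\pmod p$ by writing $w=(\mathrm{const})\,v\,z$ with $\div(v)=\sum_k(2T+kS)-\sum_k(T+kS)$, observing that $v=(\mathrm{const})\,(z\circ\tau_{-T})$ because $\div(z\circ\tau_{-T})=\div(v)$, and then using commutativity of translations to get $v\circ\tau_S=\zeta_p^{a}v$. (Throughout one uses that $p$ is odd, so that the degree-zero divisors above actually sum to $O$.)

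Next I would prove that $z$ and $w$ generate $K(H_S^d)$. The pole divisor of $z$ has degree $p$, so $[K(H_S^d):K(z)]=p$; and $w\notin K(z)$, since a comparison of degrees, together with the location of the poles of $w$, would otherwise force $w$ to be a polynomial $r_1z+r_0$, making all zeros of $w$ lie over a single value of $z$, whereas the $z$-values at the points $2T+kS$ are $\zeta_p^{ak}z(2T)$ for $k=0,\dots,p-1$, which are pairwise distinct. Hence $K(z,w)=K(H_S^d)$ and $w$ has a monic minimal polynomial $w^{p}+\sum_{i=0}^{p-1}c_i(z)w^{i}=0$ over $K(z)$ of degree $p$; since the poles of $w$ lie exactly above $z=\infty$, $w$ is integral over $K[z]$, so $c_i\in K[z]$. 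Applying $\tau_S^{*}$ to this relation and using uniqueness of the minimal polynomial gives $\zeta_p^{2ai}c_i(\zeta_p^{a}z)=c_i(z)$, so $c_i(z)$ is supported on monomials $z^{j}$ with $j\equiv-2i\pmod p$. The one genuinely delicate point, and what I expect to be the main obstacle, is the degree bound $\deg c_i\le p-i$ for all $i$. I would establish it by dividing the relation by $w^{p}$, giving $1=-\sum_{i<p}c_i(z)w^{\,i-p}$, and examining this near each of the $p$ points $kS$ lying above $z=\infty$: there $z$ and $w$ have simple poles, and the value $(w/z)(kS)=\zeta_p^{ak}(w/z)(O)$ runs over $p$ distinct nonzero numbers; if some $\deg c_i$ exceeded $p-i$, then the terms of lowest order in the displayed identity would have to cancel at every $kS$, which would exhibit a nonzero polynomial of degree at most $p-1$ vanishing at these $p$ distinct values --- a contradiction.

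Finally, combining the congruence $j\equiv-2i\pmod p$ with $\deg c_i\le p-i$ pins everything down: $c_0=\gamma_{0,0}+\gamma_{0,p}z^{p}$, $c_i=\gamma_iz^{\,p-2i}$ for $1\le i\le\frac{p-1}{2}$, and $c_i=0$ for $\frac{p+1}{2}\le i\le p-1$. Renaming the constants --- with $\beta$ the coefficient of $w^{p}$, $\alpha_1=\gamma_{0,p}$, $\alpha_{i+1}=\gamma_i$ for $1\le i\le\frac{p-1}{2}$, and $\gamma=\gamma_{0,0}$ --- turns the minimal polynomial into exactly the asserted equation, and since $z$ and $w$ generate $K(H_S^d)$ this equation defines a model of $H_S^d$ over $K$. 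Everything except the degree bound is routine bookkeeping with divisors on $E$ and with the $\tau_S^{*}$-eigenspace decomposition.
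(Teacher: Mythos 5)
Your proposal is correct, but it reaches the theorem by a genuinely different route from the paper. The paper factors through the $p$-isogeny $\varphi\colon E\to E/\langle S\rangle$: it identifies $z^p$, $z^{p-2i}w^i$ and $w^p$ as pullbacks of functions $f_1$, $h^i f_1$, $f_2$ on the quotient curve, and then produces the single linear relation $f_2=\lambda+f_1P(h)$ with $\deg P\le\frac{p-1}{2}$ by writing $f_1,f_2,h$ in terms of the Weierstrass $\sigma$-function and exploiting that $(f_2-\lambda)/f_1$ and $h$ are \emph{even} around $z_T$, so their Laurent expansions contain only even powers and the pole of order $\le p-1$ can be killed by a polynomial in $h$; generation of $K(H_S^d)$ by $z,w$ is then handled separately by a degree count. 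You instead stay on the twist itself: you observe that $\tau_S$ descends to a $K$-automorphism of $H_S^d$ of order $p$ (because the cocycle takes values in translations, which commute), diagonalize its action using $\zeta_p\in K$ to get $z\circ\tau_S=\zeta_p^a z$, $w\circ\tau_S=\zeta_p^{2a}w$ with $a\not\equiv 0$, and then pin down the minimal polynomial of $w$ over $K(z)$ via integrality, the eigenvalue congruence $j\equiv-2i\pmod p$, and the degree bound $\deg c_i\le p-i$, which you correctly extract from the distinctness of the $p$ values $(w/z)(kS)=\zeta_p^{ak}(w/z)(O)$ at the poles. Your argument is purely function-field-theoretic (no complex uniformization), it proves generation and the defining equation in one stroke, it normalizes $\beta=1$ and certifies irreducibility of the model as the minimal polynomial of $w$; the paper's argument is more constructive about where the relation comes from (a relation on the quotient curve) and is the version that generalizes to explicitly computing the coefficients, as done for $p=3$ in Section~\ref{sec:2}. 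Both implicitly restrict to odd $p$ in this part, with $p=2$ treated separately in Section~\ref{sec:2.1}, so that is not a gap. One small inaccuracy: your parenthetical claim that oddness of $p$ is needed for the divisors to sum to $O$ is off --- the sums equal $pT=O$ (respectively $pT=O$ for $w/z$) for every prime $p$, since the $S$-contributions cancel; oddness is only needed for the shape $zw^{\frac{p-1}{2}}$ of the monomial list itself.
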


Next, by using the well-known Kummer sequence for the elliptic curve $E$
\begin{equation}\label{eq:kummer_seq}
    \begin{tikzcd}
        0 \arrow[r] & E(K_v)/pE[K_v] \arrow[r, "\delta"] & H^1(K_v,E[p]) \arrow[r] & H^1(K_v,E)[p] \arrow[r] & 0.
    \end{tikzcd}    
\end{equation}
and a precise description of $\im{\delta}$, the image of the connecting homomorphisam in the Kummer sequence, for primes $v$ of good reduction we obtain the following result about the locally solubility of the twist $H_S^d$.

\begin{theorem}\label{thm_Hd_ELS_1}
    
    Let $v \in M_K$ be a prime of good reduction such that $v\mid d$ and $v \nmid p \cdot \infty$. 
    Than $H_S^d(K_v) = \emptyset$.
    Specially, $H_S^d$ is everywhere locally solvable only for finitely many $p$-power-free integers $d$.
\end{theorem}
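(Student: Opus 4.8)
The plan is to convert the assertion $H_S^d(K_v)=\emptyset$ into the statement that a certain local cohomology class is ramified, and then play this off against the image of the Kummer map. First, since $\xi_d$ takes values in the translation subgroup $E(\overline{K})\subseteq\operatorname{Isom}(E)$ (indeed in $\langle S\rangle\subseteq E[p]$), the twist $H_S^d$ is a torsor under $E$ whose class $\operatorname{cl}(H_S^d)\in H^1(\GalK,E)$ is the image of $[\xi_d]\in H^1(\GalK,E[p])$ under the natural map. Hence for every place $v$ we have $H_S^d(K_v)\neq\emptyset$ iff $\operatorname{res}_v\operatorname{cl}(H_S^d)=0$ in $H^1(K_v,E)$, which by the Kummer sequence~\eqref{eq:kummer_seq} holds iff $\operatorname{res}_v[\xi_d]\in H^1(K_v,E[p])$ lies in $\im\delta$. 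So Theorem~\ref{thm_Hd_ELS_1} reduces to showing $\operatorname{res}_v[\xi_d]\notin\im\delta$ for the primes $v$ in question.

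Next I would identify the local class. Since $E[p]\subseteq E(K)$, the Galois action on $E[p]$ is trivial, so $H^1(\GalK,E[p])=\operatorname{Hom}(\GalK,E[p])$, and $\xi_d$ is simply the homomorphism that factors through $\operatorname{Gal}(L/K)$ via the Kummer character $\sigma\mapsto\sigma(\sqrt[p]{d})/\sqrt[p]{d}$ followed by $\zeta_p^k\mapsto kS$; it is injective on $\operatorname{Gal}(L/K)$ because $S$ has order $p$. Now take $v\mid d$ with $v\nmid p\cdot\infty$. As $d$ is $p$-power-free, $\ord_v(d)$ is prime to $p$ (this is immediate when $v$ is unramified over $\Q$, which covers all but finitely many $v\mid d$), so $L_w/K_v=K_v(\sqrt[p]{d})/K_v$ is a totally, tamely ramified extension of degree $p$; therefore the inertia subgroup $I_v\subseteq\GalKv$ surjects onto $\operatorname{Gal}(L_w/K_v)$ and $\operatorname{res}_{I_v}[\xi_d]$ is a nonzero element of $\operatorname{Hom}(I_v,E[p])=H^1(I_v,E[p])$. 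In other words, $\operatorname{res}_v[\xi_d]$ is a ramified class.

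It remains to compare with $\im\delta$. For a prime $v$ of good reduction with $v\nmid p\cdot\infty$, the precise description of $\im\delta$ says that it equals $H^1_{\mathrm{ur}}(K_v,E[p])$, the subgroup of classes that restrict trivially to $I_v$. Since $\operatorname{res}_v[\xi_d]$ is ramified it does not lie in this subgroup, hence $H_S^d(K_v)=\emptyset$. For the ``specially'' clause: if $H_S^d$ is everywhere locally solvable, then by the above $d$ cannot be divisible by any prime $\ell$ for which $K$ has a place $v\mid\ell$ that is of good reduction for $E$, does not lie over $p$, and satisfies $\ord_v(d)$ prime to $p$; this excludes every rational prime except those in the fixed finite set $\mathcal{E}$ consisting of $p$, the primes of bad reduction of $E$, and the primes ramifying in $K/\Q$. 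Thus every prime divisor of an everywhere locally solvable $d$ lies in $\mathcal{E}$, and as $d$ is $p$-power-free this forces $\abs{d}\le\prod_{\ell\in\mathcal{E}}\ell^{p-1}$; only finitely many integers $d$ satisfy this.

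The main obstacle — and the step the paper must treat with real care — is the ``precise description of $\im\delta$'' used in the last paragraph, namely that for good primes away from $p$ the local Kummer image is exactly $H^1_{\mathrm{ur}}(K_v,E[p])$; granting this, everything else is formal. A smaller point deserving attention is the claim that $K_v(\sqrt[p]{d})/K_v$ is genuinely ramified: this needs $p\nmid\ord_v(d)$ rather than merely $v\mid d$, but the weaker assertion (true automatically at primes unramified over $\Q$) already yields the finiteness statement, since it produces a ramified local obstruction at some place above each prime factor of $d$ outside the finite set $\mathcal{E}$.
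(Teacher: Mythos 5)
Your argument follows essentially the same route as the paper's: the torsor-triviality criterion of Lemma~\ref{lema_1}, the Kummer sequence, Cassels' identification (Lemma~\ref{lema_cassels}) of $\im{\delta}$ with the unramified classes at good primes away from $p$, and the ramifiedness of the restricted class $\xi_d^v$ for $v\mid d$. Your caveat that genuine ramification of $K_v(\sqrt[p]{d})/K_v$ requires $p\nmid\ord_v(d)$ rather than merely $v\mid d$ --- automatic at places unramified over $\Q$, and still sufficient for the finiteness conclusion --- is in fact a point the paper's own proof glosses over.
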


In Section~\ref{sec:2} we will prove Theorem~\ref{p-twist_model} for the cases $p=2$ and $p=3$ by explicitly calculating the functions $z$ and $w$ and writing down the equation for the twist $H_S^d$. After that, we use the model for $H_S^d$ to prove Theorem~\ref{thm_Hd_ELS_1} for $p=2$.
In Section~\ref{sec:3} we used different approaches to obtain the results of Theorems~\ref{p-twist_model} and \ref{thm_Hd_ELS_1}, due to the difficulty in computing the coefficients for the model of $H_S^d$ in the general case.

\section{Twist arising from \texorpdfstring{$2$}{2}-torsion and \texorpdfstring{$3$}{3}-torsion points}\label{sec:2}

\subsection{Twist arising from 2-torsion points}\label{sec:2.1}

Let $E/\Q$ be an elliptic curve and $T\in E(\Q)[2]$ a rational $2$-torsion point on $E$. For a square-free integer $d$, denote by $H^d_T$ the twist of $E$ that corresponds to the cocycle $\zeta : \GalQ \to E$ defined by 
\[
    \zeta (\sigma)=
    \begin{cases}
        O, & \text{if } \sigma(\sqrt{d})=\sqrt{d}, \\
        T, & \text{if } \sigma(\sqrt{d})=-\sqrt{d}.\\
    \end{cases}
\]

By changing the coordinates we may assume that $T=(0,0)$ and $E \,:\, y^2=x^3+ax^2+bx$.
In this case we can write an equation for $H^d_T$ as in \cite{silverman2009arithmetic} (Chapter X.3, Remark 3.7)
\[
    H^d_T \,:\, dy^2=(a^2-4b)x^4-2adx^2+d^2.
\]

\begin{proposition}\label{2-twist_els}
    Let $E/\Q$ be an elliptic curve and $T\in E(\Q)[2]$ a rational $2$-torsion point on $E$. For a square-free integer $d$, let $H^d_T$ be a twist of $E$ defined as above.
    If the curve $H^d_T$ is ELS, then all prime factors of $d$ divide $2a(a^2-4b)$.
    Specially, $H^d_T$ is ELS only for finitely many square-free integers $d$.
\end{proposition}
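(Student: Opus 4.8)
The plan is to prove the contrapositive by producing a local obstruction at a well-chosen prime. Suppose $H^d_T$ is everywhere locally solvable, yet some prime $\ell$ divides $d$ but not $2a(a^2-4b)$; in particular $\ell$ is odd and $\ell\nmid a^2-4b$. I would then show $H^d_T(\Q_\ell)=\emptyset$, which is the desired contradiction. In fact I would prove the slightly stronger statement that $H^d_T(\Q_\ell)=\emptyset$ for every odd prime $\ell$ with $\ell\mid d$ and $\ell\nmid a^2-4b$. The mechanism is a parity-of-valuation argument: since $d$ is square-free, $v_\ell(d)=1$ is odd, whereas the right-hand side of the model turns out to have even $\ell$-adic valuation on any putative point.

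Concretely, I would work with the weighted-projective model
\[
    dY^2=(a^2-4b)X^4-2adX^2Z^2+d^2Z^4 \quad\text{in } \P(1,2,1),
\]
and normalize a hypothetical $\Q_\ell$-point $[X:Y:Z]$, using the scaling $(X,Y,Z)\mapsto(\lambda X,\lambda^2 Y,\lambda Z)$, so that $X,Z\in\Z_\ell$ with $\min(v_\ell(X),v_\ell(Z))=0$; this single normalization simultaneously covers the affine points and the points at infinity ($Z=0$). On the left, $v_\ell(dY^2)=1+2v_\ell(Y)$ is odd (so in particular $Y\neq 0$). On the right, a short case split on $v_\ell(X)$ shows the valuation is even: if $v_\ell(X)=0$ then $(a^2-4b)X^4$ is an $\ell$-adic unit while the remaining two terms are divisible by $\ell$, so the valuation is $0$; if $v_\ell(X)\geq 1$ then $v_\ell(Z)=0$, and the term $d^2Z^4$ has valuation $2$, strictly below the valuations of the other two terms, so the valuation is $2$. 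Either way the two sides have valuations of opposite parity, giving $H^d_T(\Q_\ell)=\emptyset$.

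For the final assertion, this shows that if $H^d_T$ is ELS then every prime factor of $d$ divides $a^2-4b$, hence a fortiori divides $2a(a^2-4b)$. Since $E$ is nonsingular, $\disc(x^3+ax^2+bx)=b^2(a^2-4b)\neq 0$, so $2a(a^2-4b)$ (or already $2(a^2-4b)$) is a nonzero integer, and the square-free integers whose prime factors all lie in its finite set of prime divisors form a finite set; this yields finitely many admissible $d$. I do not expect a genuinely hard step here — the computation is a routine valuation chase. The only points needing care are checking that $\ell=2$ is harmlessly excluded (it divides $2a(a^2-4b)$), and that the weighted-projective normalization really makes the two-case analysis on $v_\ell(X)$ exhaustive, so that the points at infinity are not overlooked.
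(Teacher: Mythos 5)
Your proof is correct and follows essentially the same route as the paper: a parity-of-valuations argument at an odd prime $\ell\mid d$ showing the left side of $dy^2=(a^2-4b)x^4-2adx^2+d^2$ has odd valuation while the right side has even valuation. The only differences are cosmetic (you normalize in weighted projective coordinates where the paper allows $v_\ell(x)<0$ directly) plus the correct observation that $\ell\nmid a$ is never actually used, so the condition can be sharpened to ``all prime factors of $d$ divide $2(a^2-4b)$.''
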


\begin{proof}
    We will prove this by showing that if there is a prime factor $p$ of $d$ such that ${p \nmid 2a(a^2-4b)}$, then $H^d_T$ is not ELS. 
    Let $d$ be a square-free integer and $p$ a prime factor of $d$ such that $p \nmid 2a(a^2-4b)$. Suppose that $H^d_T$ is ELS. 
    
    Since $H^d_T$ is ELS there exists a point $(x,y)\in H_T^d(\Qp)$. Let us write $x=p^\alpha x_1$ and $y=p^\beta y_1$ for some integers $\alpha$, $\beta$ and $x_1$, $y_1 \in \Z_p^\times$.

    Now we have that 
    \begin{align*}
        v_p(dy^2)&= v_p(dp^{2\beta}y_1)=2\beta+1,\\
        v_p((a^2-4b)x^4-2adx^2+d^2)&=v_p((a^2-4b)p^{4\alpha}x_1^4-2adp^{2\alpha}x_1^2+d^2)=
        \begin{cases}
            2, & \text{if } \alpha \geq 1, \\
            0, & \text{if } \alpha =0, \\
            4\alpha , & \text{if } \alpha < 0.
        \end{cases}
    \end{align*}

    Since $v_p(dy^2)$ is always odd and $v_p((a^2-4b)x^4-2adx^2+d^2)$ is always even, we get a contradiction. Thus, $H^d_T$ is not ELS for such integer $d$.
\end{proof}

\subsection{Twist arising from 3-torsion points}\label{sec:2.2}

Let $K$ be a number field that contains the $3$rd root of unity $\zeta_3$, $d$ a cube-free integer and $L=K(\sqrt[3]{d})$.

Let $E/K$ be an elliptic curve with full $3$-torsion  and $S,T \in E(K)[3]$ the generators of $E(K)[3]$. 
Using Derickx and Sutherland's parametrization for an elliptic curve with points $S$ and $T$ of order $3$ (see~\cite{Derickx_2017}) and by changing the coordinates we may assume that
\[
    E\, : \, y^2+((2+\zeta_3)a+1-\zeta_3)xy+((1+\zeta_3)a^2-\zeta_3 a)y=x^3,
\]
$S=(0,0)$ and $T=(-a,a)$ for some $a \in K$.

We define the cocycle $\xi_d : \GalK \to E$ by
\[
    \xi_d (\sigma)=
    \begin{cases}
        O, & \text{if } \sigma(\sqrt[3]{d})=\sqrt[3]{d}, \\
        S, & \text{if } \sigma(\sqrt[3]{d})=\zeta_3\sqrt[3]{d},\\
        2S, & \text{if } \sigma(\sqrt{d})=\zeta_3^2\sqrt[3]{d}.\\
    \end{cases}
\]
Denote by $H_S^d$ the twist of $E$ corresponding to the cocycle $\xi_d$.

Next, in order to obtain an equation for the twist $H_S^d$ we define functions $z,w \in K(H_S^d)$ as
\begin{align*}
    \div{(z)} &= (T)+(T+S)+(T+2S)-(O)-(S)-(2S),\\
    \div{(w)} &= (2T)+(2T+S)+(2T+2S)-(O)-(S)-(2S).
\end{align*}
Using MAGMA one can easily verify that the functions 
\begin{align*}
    z = & \frac{((1+\zeta_3)a^2+(-2\zeta_3-1)a+\zeta_3)y}{\sqrt[3]{d}x} +\frac{(1+\zeta_3)a^4+(-3\zeta_3-1)a^3+(2\zeta_3-1)a^2+a}{\sqrt[3]{d}x} \\
        &+\frac{(1+\zeta_3)a^3-2\zeta_3 a^2-2a+\zeta_3+1}{\sqrt[3]{d}},
\end{align*}

\begin{align*}
    w = &\frac{((-1-\zeta_3)a^2+(1+2\zeta_3)a-\zeta_3)y}{\sqrt[3]{d^2}x}+\frac{a^4+(-2\zeta_3-3)a^3+(3\zeta_3+2)a^2-\zeta_3 a}{\sqrt[3]{d^2}x}\\
    &+\frac{-\zeta_3 a^3-2a^2+ (2+2\zeta_3)a-\zeta_3}{\sqrt[3]{d^2}}
\end{align*}
satisfy the above divisor conditions and that they are in the function field $K(H_S^d)$. 

With this we are now ready to describe a model for the twist $H_S^d$.

\begin{proposition}\label{3-twist_model}
    With the above notation we have that functions $z$ and $w$ generate the function filed $K(H_S^d)$ and satisfy the equation
    \[
        H_S^d\,:\, dz^3+3d\alpha zw+d^2w^3+\beta=0
    \]
    where $\alpha = a^3+2\zeta_3^2 a^2 +2\zeta_3 a +1$ and 
    $\beta=-a^9+3(3+\zeta_3)a^8-3(\zeta_3+11)a^7+(63\zeta_3+64)a^6-3(35\zeta_3+23)a^5+3(35\zeta_3+12)a^4+(1-63\zeta_3)a^3+3(7\zeta_3-4)a^2+3(2-\zeta_3)a-1$.
\end{proposition}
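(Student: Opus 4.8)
The plan is the following. The formulas above already supply candidates $z,w$, and (a routine check, carried out in \textsc{MAGMA} above, or its conceptual version in Proposition~\ref{prop_zw_def}) they lie in $K(H_S^d)$ with the stated divisors; what remains is to show that $z,w$ generate $K(H_S^d)$, that they satisfy a relation of the displayed form, and then to identify $\alpha,\beta$. Both of the first two will follow from one structural input, which I would establish first: the action of translation by $S$ on $z$ and $w$. Work over $\overline K$, where $H_S^d\cong E$, and put $\tau_S\colon P\mapsto P+S$. Because $S$ has order $3$, pulling $\div(z)$ and $\div(w)$ back along $\tau_S$ changes nothing (e.g.\ $\tau_S^*\div(z)=\sum_k\bigl((T+kS)-(kS)\bigr)=\div(z)$ after reindexing), so $z\circ\tau_S=c_zz$ and $w\circ\tau_S=c_ww$ with $c_z^3=c_w^3=1$. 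Now $c_z$, $c_w$ and $c_z/c_w$ are all $\neq1$: if, say, $z\circ\tau_S=z$, then $z$ would factor through the quotient isogeny $E\to E/\langle S\rangle$ of degree $3$, forcing $\deg\phi_z\geq6$, whereas $\phi_z\colon H_S^d\to\P^1$ has degree $3$ (the degree of the polar divisor of $z$); the same argument applied to $w$ and to $z/w$ — whose polar divisors also have degree $3$ — excludes $c_w=1$ and $c_z=c_w$. Among cube roots of unity this forces $\{c_z,c_w\}=\{\zeta_3,\zeta_3^{2}\}$, and writing $\zeta:=c_z$ we get $z\circ\tau_S=\zeta z$, $w\circ\tau_S=\zeta^{-1}w$. (One may instead read $c_z,c_w$ off the explicit formulas, since every term of $z$ carries a factor $(\sqrt[3]d)^{-1}$ and every term of $w$ a factor $(\sqrt[3]{d^2})^{-1}$.)

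For generation: being $\tau_S$-eigenfunctions with distinct eigenvalues $1,\zeta,\zeta^{-1}$, the functions $1,z,w$ are $\overline K$-linearly independent. Since $\deg\phi_z=3$ is prime, $K(z,w)$ is either $K(z)$ or $K(H_S^d)$, and the former cannot happen: if $w\in K(z)$ then over $\overline K$ one has $w=g(z)$ with $g$ rational; comparing degrees via $\deg\phi_w=\deg g\cdot\deg\phi_z$ gives $\deg g=1$, and comparing polar divisors (both $z$ and $w$ have polar divisor $(O)+(S)+(2S)$) forces the pole of $g$ to sit at $\infty$, so $g$ is affine and $w=\lambda z+\mu$, contradicting independence. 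Hence $K(z,w)=K(H_S^d)$. (In particular $1,z,w$ is a basis of the three-dimensional space $L(D)$, where $D$ is the common polar divisor of $z$ and $w$.)

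For the relation: the ten monomials $z^iw^j$ with $i+j\leq3$ all lie in $L(3D)$, which has dimension $9$ by Riemann--Roch, so they satisfy a $K$-linear relation. Since $z^iw^j\circ\tau_S=\zeta^{\,i-j}z^iw^j$, applying $\tau_S$ to a relation and subtracting reduces it to relations supported in a single residue class $i-j\bmod 3$; the two classes other than $0$ are $\{z,w^2,z^2w\}$ and $\{z^2,w,zw^2\}$, and within each the pole orders at $O$ are $1,2,3$ — all distinct — so no nonzero relation is supported there. Thus every relation is supported on $\{1,z^3,zw,w^3\}$; since $1,zw,z^3$ are independent (pole orders $0,2,3$ at $O$), the relation space is one-dimensional and the relation has the shape $c_{00}+c_{30}z^3+c_{11}zw+c_{03}w^3=0$. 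I would then fix the ratio $c_{00}:c_{30}:c_{11}:c_{03}$ by matching Laurent expansions of $z$ and $w$ at $O$ — the $t^{-3},t^{-2},t^{-1}$ coefficients give $c_{30}:c_{11}:c_{03}$ and the $t^0$ coefficient (or evaluation of the relation at one convenient point) gives $c_{00}$ — and clear denominators to obtain $\alpha=a^3+2\zeta_3^2a^2+2\zeta_3a+1$ and the stated $\beta$; this last step is concrete but lengthy and is best left to \textsc{MAGMA}.

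The crux is the translation step — specifically upgrading $c_z^3=c_w^3=1$ to $c_z,c_w$ primitive and distinct — which is exactly where the chosen normalization of $z$ and $w$ enters and which is what confines the relation to the four-dimensional space $\langle1,z^3,zw,w^3\rangle$ rather than to all of the nine-dimensional $L(3D)$; without it one would only learn that $z,w$ lie on some plane cubic, not that it has the special form claimed. Everything afterwards — the Riemann--Roch counts, the pole-order comparisons, and the residue computation of $\alpha,\beta$ — is routine bookkeeping.
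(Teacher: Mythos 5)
Your proposal is correct, but it proves the proposition by a genuinely different (and more structural) route than the paper. The paper's proof is essentially a direct verification: it checks the cubic relation in MAGMA, exhibits the explicit inverse of $(x,y)\mapsto(z,w)$ to see that the plane cubic is isomorphic to $E$, and then computes $\phi^\sigma\circ\phi^{-1}$ to confirm that the resulting twist corresponds to the cocycle $\xi_d$. You instead work intrinsically with $z,w\in K(H_S^d)$ (granted by Proposition~\ref{prop_zw_def} or the MAGMA check preceding the statement) and extract everything from the translation action: $z\circ\tau_S=\zeta_3 z$, $w\circ\tau_S=\zeta_3^{-1}w$ with the eigenvalues shown to be primitive and distinct via the degree of the quotient isogeny. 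This gives generation (eigenfunctions with distinct eigenvalues are independent, and the index-$3$ alternative $w\in K(z)$ is excluded by the polar-divisor comparison) and confines the a priori relation in the $9$-dimensional space $\mathcal{L}(3D)$ to the trivial eigenclass $\{1,z^3,zw,w^3\}$, where distinct pole orders at $O$ pin down a one-dimensional relation space with nonvanishing coefficients of $z^3$ and $w^3$. Both proofs delegate the actual values of $\alpha$ and $\beta$ to MAGMA, so nothing is lost there. What your approach buys is a conceptual explanation of the shape of the equation, and it is close in spirit to the paper's general Proposition~\ref{p-twist_dep} while being more elementary for $p=3$ (no Weierstrass $\sigma$-functions); note also that your eigenclass refinement is exactly what rescues the naive Riemann--Roch count that the paper's Remark after Proposition~\ref{p-twist_dep} observes to fail for general $p$ --- for $p=3$ one has $10>\dim\mathcal{L}(3D)=9$, and the two nontrivial eigenclasses carry no relations, so the relation must live where claimed. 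The only points worth tightening are the parenthetical claim that $c_z,c_w$ can be ``read off'' the $\sqrt[3]{d}$-factors (this requires combining the Galois action on $\sqrt[3]{d}$ with the fact that $z,w$ are fixed by the twisted action, not just inspecting the formulas) and a one-line remark that a linear dependence over $\overline{K}$ of functions lying in the $K$-vector space $K(H_S^d)$ descends to a dependence over $K$.
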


\begin{proof}
    Using MAGMA one can easily check that for functions $z$ and $w$ the equation $dz^3+3d\alpha zw+d^2w^3+\beta=0$ holds. 

    Let $C \,:\, dz^3+3d\alpha zw+d^2w^3+\beta=0$ be the curve given with the previous equation. We will show that the curve $C$ is isomorphic to the elliptic curve $E$ and that the curve $C$, as a twist of $E$, corresponds to the cocycle $\xi_d$.

    We have the map $\varphi : E \to C$ defined by $\varphi(x,y)=(z,w)$. It can be checked that the map $\varphi$ is indeed a isomorphisam between the curves $E$ and $C$ simply by calculating the inverse $\phi=\varphi^{-1}$. The inverse $\phi : C \to E$ is given by $\phi(z,w)=(X,Y)$ where
    \begin{align*}
        X &= \frac{(\zeta_3+2)a^4+(-5\zeta_3-4)a^3+(5\zeta_3+1)a^2+(1-\zeta_3)a}{\sqrt[3]{d}z+\sqrt[3]{d^2}w-a^3+(2\zeta_3+2)a^2-2\zeta_3a-1},\\
        Y &= \frac{(\zeta_3+2)a^4+(-5\zeta_3-4)a^3+(5\zeta_3+1)a^2+(1-\zeta_3)a}{(1+\zeta_3)a^2+(-2\zeta_3-1)a+\zeta_3}\cdot \frac{\sqrt[3]{d}z+(-\zeta_3-1)a^3+2\zeta_3a^2+2a+(-\zeta_3-1)}{\sqrt[3]{d}z+\sqrt[3]{d^2}w-a^3+(2\zeta_3+2)a^2-2\zeta_3a-1} \\
         & + \frac{(-\zeta_3-1)a^4+(3\zeta_3+1)a^3+(-2\zeta_3+1)a^2-a}{(1+\zeta_3)a^2+(-2\zeta_3-1)a+\zeta_3}.
    \end{align*}

    Since $C$ is isomorphic to $E$ over $L$, we have that the curve $C$ is a twist of the elliptic curve $E$. Let $\xi_C : \GalK \to E$ be the cocycle corresponding to the twist $C$. Than we have that $\xi_C(\sigma) = \phi^\sigma \circ \phi^{-1}$ for all $\sigma \in \GalK$.

    Let us now take a $\sigma \in \GalK$ such that $\sigma(\sqrt[3]{d})= \zeta_3 \sqrt[3]{d}$. Using MAGMA we then calculate
    \[
        \phi^\sigma \circ \phi^{-1} (x,y) = \phi^\sigma(z,w) = \left(\frac{(-\zeta_3-1)a^2+\zeta_3a}{x^2}y, \frac{-\zeta_3a^4-2a^3+(1+\zeta_3)a^2}{x^3}y\right)=(x,y)+S.
    \]
    From the above calculations we have that $\xi_C(\sigma)=\xi_d(\sigma)$ for all such $\sigma \in \GalK$.

    By similar calculations we also get that $\xi_C(\sigma)=\xi_d(\sigma)$ for all the others $\sigma \in \GalK$. Therefore $\xi_C=\xi_d$, from which follows that the curve $C$ is really our twist $H_S^d$.  
\end{proof}

Note that from Theorem~\ref{thm_Hd_ELS_1} for $p=3$ we get a similar result for the twist $H_S^d$ as earlier for the twists arising form $2$-torsion points. Specially, $H_S^d$ is ELS for only finitely many cube-free integers $d$.

\section{Twists arising from \texorpdfstring{$p$}{p}-torsion points}\label{sec:3}

Let $p$ be a prime number, $K$ a number field that contains the $p$-th root of unity $\zeta_p$, $d$ a $p$-power-free integer and $L=K(\sqrt[p]{d})$. 

Let $E/K$ be an elliptic curve with full $p$-torsion  and $S,T \in E(K)[p]$ the generators of $E(K)[p]$. 

We define the cocycle $\xi_d : \GalK \to E$ by
\[
    \xi_d (\sigma)=
    \begin{cases}
        O, & \text{if } \sigma(\sqrt[p]{d})=\sqrt[p]{d}, \\
        kS, & \text{if } \sigma(\sqrt[p]{d})=\zeta_p^k\sqrt[p]{d}.
    \end{cases}
\]
Denote by $H_S^d$ the twist of $E$ corresponding to the cocycle $\xi_d$. 

We also define the \textit{twisted action} of $\GalK$ by the cocycle $\xi_d$. The twisted action of $\sigma \in \GalK$ by $\xi_d$ acts on a function $f \in \overline{K}(E)$ as 
\[
    f^{\sigma^{*}}=f^\sigma \xi_d(\sigma).
\]
It is known that that the function field $K(H_S^d)$ is the fixed field of $\overline{K}(E)$ by the twisted action of $\GalK$ by the cocycle $\xi_d$.

In order to obtain an equation for the twist $H_S^d$, we define functions $z,w \in \overline{K}(E)$ as
\begin{align*}
    \div{(z)} &= (T)+(T+S)+\dotso+(T+(p-1)S)-(O)-(S)-\dotso -((p-1)S),\\
    \div{(w)} &= (2T)+(2T+S)+\dotso+(2T+(p-1)S)-(O)-(S)-\dotso -((p-1)S).
\end{align*}
Note that with this the functions $z$ and $w$ are only well defined up to multiplication by a constant. The next Proposition shows that we can choose this constants in such a way that the functions $z$ and $w$ end up in the function field of the twist $H_S^d$.

\begin{proposition}\label{prop_zw_def}
    There exists functions $z$ and $w$ in the function field $K(H_S^d)$ with the above divisors.
\end{proposition}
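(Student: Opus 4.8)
The plan is to produce $z$ and $w$ with the prescribed divisors and then show that, after rescaling by suitable constants, they are fixed by the twisted action of $\GalK$. The key structural fact is that the twisted action of $\sigma$ on $\overline{K}(E)$ is $f^{\sigma^{*}} = f^{\sigma}\circ(\text{translation by }\xi_d(\sigma))$, and a function lies in $K(H_S^d)$ precisely when it is fixed by $f \mapsto f^{\sigma^{*}}$ for all $\sigma$. So I want to arrange $z^{\sigma^{*}} = z$ and $w^{\sigma^{*}} = w$.

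First I would observe that the divisor of $z$ is $\GalK$-stable in the \emph{twisted} sense: since $S$ and $T$ are $K$-rational, the divisor $D_z = \sum_{k=0}^{p-1}(T+kS) - \sum_{k=0}^{p-1}(kS)$ satisfies $\sigma(D_z) = D_z$ for the ordinary action, and under translation by $\xi_d(\sigma) = kS$ the two sums of $p$-torsion points are each merely permuted, so $(D_z)^{\sigma^{*}} = D_z$ as well; the same holds for $D_w$. Consequently $z^{\sigma^{*}}$ has the same divisor as $z$, hence $z^{\sigma^{*}} = c_\sigma\, z$ for some $c_\sigma \in \overline{K}^{\times}$, and likewise $w^{\sigma^{*}} = c'_\sigma\, w$. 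A direct computation using the cocycle relation $\xi_d(\sigma\tau) = \xi_d(\sigma) + \xi_d(\tau)^{\sigma}$ (which here is just additivity of $k \mapsto kS$ composed with the cyclotomic character on $L/K$) shows that $\sigma \mapsto c_\sigma$ is a cocycle in $H^1(\GalK, \overline{K}^{\times})$; moreover it factors through $\Gal(L/K)$, which is cyclic of order $p$.

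Next I would invoke Hilbert 90: $H^1(\GalK, \overline{K}^{\times}) = 0$, so $c_\sigma = u^{\sigma^{*}}/u$, i.e.\ $c_\sigma = u^{\sigma}/u$, for some $u \in \overline{K}^{\times}$ — but I need the correcting constant to lie in a controlled place. Concretely, since the cocycle is valued in $L^{\times}$ and factors through the cyclic group $\Gal(L/K) = \langle \sigma_0\rangle$ with $\sigma_0(\sqrt[p]{d}) = \zeta_p\sqrt[p]{d}$, one checks that $c_{\sigma_0}$ is a $p$-th root of unity (evaluate the relation at $\sigma_0^p = \mathrm{id}$: $\prod_{j=0}^{p-1} c_{\sigma_0}^{\sigma_0^{j}} = 1$, and since $c_{\sigma_0} \in L$ one gets $N_{L/K}(c_{\sigma_0}) = 1$; combined with the fact that the $p$ translates of $z$ differ by the cyclic shift one pins down $c_{\sigma_0} = \zeta_p^{m}$ for an explicit $m$, in fact $m = 1$ by the normalization of the divisors — compare the $p=3$ computation where $z$ picks up exactly $\xi_d(\sigma_0) = S$). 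Then rescaling $z \rightsquigarrow (\sqrt[p]{d})^{-1} z$ (or the appropriate power) kills the factor $\zeta_p$ because $\sqrt[p]{d}$ transforms by $\zeta_p$ under $\sigma_0$, and similarly $w \rightsquigarrow (\sqrt[p]{d})^{-2} w$; after this normalization $z, w$ are genuinely $\sigma^{*}$-invariant, hence lie in $K(H_S^d)$.

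The main obstacle I anticipate is the bookkeeping in the previous paragraph: showing that the constant $c_{\sigma_0}$ is exactly the right power of $\zeta_p$ so that dividing by the corresponding power of $\sqrt[p]{d}$ produces an invariant function, rather than merely showing $c_{\sigma_0}$ is \emph{some} root of unity. This requires tracking how translation by $S$ permutes the specific divisor $\sum_k (T+kS)$ and matching the induced scalar on $z$ against the cyclotomic behaviour of $\sqrt[p]{d}$ — essentially an explicit identification of the cocycle $c$ with (a power of) the Kummer cocycle $\sigma \mapsto \sigma(\sqrt[p]{d})/\sqrt[p]{d}$. I would handle it by evaluating $z$ and its translate $z^{\sigma_0}$ at a convenient base point (or by comparing leading coefficients at $O$), which reduces the claim to a single scalar identity; the $p = 3$ case worked out in Section~\ref{sec:2} is the model for this computation.
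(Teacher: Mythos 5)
Your proposal is correct and follows essentially the same route as the paper: observe that the twisted action fixes the divisor so that $z^{\sigma^{*}}=c_\sigma z$ with $c_\sigma$ a $p$-th root of unity, then cancel this scalar against the Kummer cocycle of $\sqrt[p]{d}$ by rescaling $z$ (resp.\ $w$) by a power of $\sqrt[p]{d}$. The only caveat is that your claim $c_{\sigma_0}=\zeta_p^{1}$ ``by the normalization of the divisors'' is neither needed nor automatic (the exponent is governed by the Weil-type pairing of $S$ with $T$ on $E/\langle S\rangle$); the paper simply writes $c=\zeta_p^{k}$ for some $k$ and takes $\lambda=1/\sqrt[p]{d^{k}}$, which is the hedge you already allow for, so no actual gap results.
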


\begin{proof}
    Since the divisor $(T)+(T+S)+\dotso+(T+(p-1)S)-(O)-(S)-\dotso -((p-1)S)$ has  degree $0$, its sum of points is $\displaystyle{\sum_{k=0}^{p-1}(T+kS) - \sum_{k=0}^{p-1}kS = O}$ and all the points in the divisor are $K$-rational, we know that there exists a function $f \in K(E)$ such that
    \[
        \div{(f)} = (T)+(T+S)+\dotso+(T+(p-1)S)-(O)-(S)-\dotso -((p-1)S).
    \]
    As functions $z$ and $f$ have the same divisor, it follows that $z=\lambda f$ for some constant $\lambda \in \overline{K}$.

    On the other hand, we also have that the divisor of the function $z\circ \tau_S$, where $\tau_S$ is the translation by the point $S$, is
    \[
        \div{(z\circ \tau_S)} = (T-S)+(T)+\dotso+(T+(p-2)S)-(-S)-(O)-\dotso -((p-2)S) = \div{(z)}
    \]
    Therefore, we have that there is a constant $c \in \overline{K}$ such that $z(P+S) = c z(P)$ for all points $P\in E$. 
    Since $S$ is a $p$-torsion point, we have $z(P) = z(P+pS)=c^p z(P)$ for all points $P \in E$. From there it follows that $c$ is a $p$-th root of unity. Thus, we have that $c=\zeta_p^k$ for some integer~$k$.

    The function $z$ will be in the function field $K(H_S^d)$ if and only if it is fixed by the twisted action action of $\GalK$ by the cocycle $\xi_d$. Let us take a $\sigma \in \GalK$ such that $\sigma(\sqrt[p]{d}) = \zeta_p \sqrt[p]{d}$. We want to choose the constant $\lambda$ such that $z^{\sigma^*}=z$. 
    
    We have that
    \[
        z^{\sigma^*}(P) = z^\sigma(P+S) = (\zeta_p^k z(P))^\sigma= \zeta_p^k \sigma(\lambda) f(P)
    \]
    and $z(P) = \lambda f(P)$ for all points $P \in E$. So, we have to choose the constant $\lambda$ such that $\zeta_p^k \sigma(\lambda) = \lambda$ holds. By choosing $\lambda = \dfrac{1}{\sqrt[p]{d^k}}$ one can easily check that the last equation is satisfied. With such previously chosen constant $\lambda$ one can also easily check that $z^{\sigma^*}=z$ for all the others $\sigma \in \GalK$ and therefore we have that $z \in K(H_S^d)$. 
    
    Using the same arguments as for the function $z$ we also get that there exists such a function $w \in K(H_S^d)$.
\end{proof}

\begin{proposition}\label{p-twist_dep}
    With the notation as above we have that the set $\{z^p, z^{p-2}w,\, \dotso ,\, zw^{\frac{p-1}{2}}, w^p, 1\}$ of functions in $K(H_S^d)$ is $K$-linearly dependent, i.e.\ there exist constants $\alpha_{1}, \alpha_2, \dotso, \alpha_{\frac{p+1}{2}}, \beta, \gamma \in K$ not all zero such that
    \[
        \alpha_{1}z^p+\alpha_2z^{p-2}w+\dotso+\alpha_{\frac{p+1}{2}}zw^{\frac{p-1}{2}}+\beta w^p+\gamma=0.
    \]
\end{proposition}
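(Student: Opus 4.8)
The plan is to transfer the statement to the quotient elliptic curve $E':=E/\langle S\rangle$ via the degree-$p$ isogeny $\pi\colon E\to E'$, exploiting the fact that the divisors of $z$ and $w$ were designed to respect translation by $S$. Write $P'=\pi(T)$, $R'=\pi(2T)=2P'$ and $O'=\pi(O)$; since $p$ is odd and $S,T$ generate $E[p]$, the points $P',R',O'$ are pairwise distinct. The fibre of $\pi$ over $\pi(T)$ is exactly $\{T,T+S,\dots,T+(p-1)S\}$ (and similarly over $\pi(2T)$ and $\pi(O)$), so $\div(z)=\pi^{*}\big((P')-(O')\big)$ and $\div(w)=\pi^{*}\big((R')-(O')\big)$. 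Hence $\div\big(z^{p-2k}w^{k}\big)=\pi^{*}\big((p-2k)(P')+k(R')-(p-k)(O')\big)$ and $\div(w^{p})=\pi^{*}\big(p(R')-p(O')\big)$; in each case the divisor on $E'$ has degree $0$ and, since $pP'=O'$ and $R'=2P'$, sums to $O'$, hence is principal. Therefore each of $z^{p-2k}w^{k}$ and $w^{p}$ is fixed by $\tau_{S}$ and equals a constant times the pullback of a function on $E'$. As $\pi^{*}$ is injective, it suffices to produce a $\overline K$-linear relation among the descended functions; a $\overline K$-linear relation among $z^{p},\dots,zw^{\frac{p-1}{2}},w^{p},1$ then forces a $K$-linear one, since all these functions lie in $K(H_S^d)$.

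Next I would single out the descent $f$ of $z^{2}/w$, with $\div_{E'}(f)=2(P')-(R')-(O')$, and the descent $g$ of $zw^{\frac{p-1}{2}}$, with $\div_{E'}(g)=(P')+\tfrac{p-1}{2}(R')-\tfrac{p+1}{2}(O')$. Because $z^{p-2k}w^{k}=\big(zw^{\frac{p-1}{2}}\big)\big(z^{2}/w\big)^{\frac{p-1}{2}-k}$ and $w^{p}=\big(zw^{\frac{p-1}{2}}\big)^{2}\big(z^{2}/w\big)^{-1}$, the function $z^{p-2k}w^{k}$ descends to a constant multiple of $gf^{\frac{p-1}{2}-k}$ (for $k=0,\dots,\tfrac{p-1}{2}$) and $w^{p}$ descends to a constant multiple of $g^{2}/f$. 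The pole divisor of $f$ is $(R')+(O')$, so $f$ defines a degree-$2$ map $E'\to\P^{1}$; and since $g$ has a simple zero at $P'$ whereas $f$ has a double zero there (so every element of $\overline K(f)$ has order divisible by $2$ at $P'$), we get $g\notin\overline K(f)$. Thus $\{1,g\}$ is a basis of $\overline K(E')$ over $\overline K(f)$, and $g$ satisfies $g^{2}-(g+\bar g)g+g\bar g=0$, where $\bar g=g\circ\iota$ is the conjugate of $g$ under the deck involution $\iota$ of the degree-$2$ cover $f$.

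The heart of the proof is to compute the norm $g\bar g$ and the trace $g+\bar g$ in $\overline K(f)$. Since $\iota$ fixes the ramification point $P'$ (the unique point over $f=0$) and interchanges the two points $R'$ and $O'$ lying over $f=\infty$, one gets $\div(\bar g)=(P')+\tfrac{p-1}{2}(O')-\tfrac{p+1}{2}(R')$, so $\div(g\bar g)=2(P')-(R')-(O')=\div(f)$ and therefore $g\bar g=\beta_{0}f$ for a constant $\beta_{0}\in\overline K^{\times}$. For the trace: $g+\bar g$ is $\iota$-invariant, hence lies in $\overline K(f)$, and its only poles are at $O'$ and $R'$, each of order $\tfrac{p+1}{2}$; both of these points lie over $f=\infty$ with ramification index $1$, so $g+\bar g$ is a polynomial in $f$ of degree $\tfrac{p+1}{2}$. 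Moreover $g+\bar g$ vanishes at $P'$ (both $g$ and $\bar g$ do), and $P'$ lies over $f=0$ with ramification index $2$, so the polynomial $g+\bar g$ is divisible by $f$. Hence $(g+\bar g)/f$ is a polynomial in $f$ of degree at most $\tfrac{p-1}{2}$, i.e.\ it lies in $\operatorname{span}_{\overline K}\{1,f,\dots,f^{\frac{p-1}{2}}\}$.

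Finally, rewriting $g^{2}=(g+\bar g)g-g\bar g$ gives $g^{2}/f=\dfrac{g+\bar g}{f}\,g-\beta_{0}$, which exhibits $g^{2}/f$ as a $\overline K$-linear combination of $g,gf,\dots,gf^{\frac{p-1}{2}}$ and $1$. Pulling back along $\pi^{*}$ and rescaling by the constants collected above writes $w^{p}$ as a $\overline K$-linear combination of $z^{p},z^{p-2}w,\dots,zw^{\frac{p-1}{2}}$ and $1$; this relation is nontrivial, since $w^{p}$ occurs in it with nonzero coefficient, and it descends to a $K$-linear relation because all the functions involved lie in $K(H_S^d)$. I expect the only genuine work to be the divisor bookkeeping that produces the norm and trace of $g$ over $\overline K(f)$ (in particular locating the action of $\iota$ on $P',R',O'$); everything else is the descent setup. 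One should also keep in mind that $p$ is odd here, the case $p=2$ being the degenerate one already handled in Section~\ref{sec:2.1}.
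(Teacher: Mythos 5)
Your proposal is correct, and while it shares the paper's opening move, it executes the decisive step by a genuinely different method. Like the paper, you descend through the degree-$p$ isogeny $E\to E/\langle S\rangle$ and reduce the problem to finding a relation among functions on the quotient curve supported on $\pi(T)$, $\pi(2T)$, $\pi(O)$ (your $gf^{j}$, $g^{2}/f$, $1$ are, up to constants and the substitution $h=c/f$, the paper's $f_1h^{k}$, $f_2$, $1$). Where the paper then passes to $\C/\Lambda$ and uses the Weierstrass $\sigma$-function to show that $(f_2-\lambda)/f_1$ and $h$ are even about $z_T$, so that the singular part of a Laurent expansion can be killed by a polynomial in $h$, you instead treat $f$ as a degree-$2$ map to $\P^{1}$ and work in the quadratic extension $\overline{K}(E')/\overline{K}(f)$, computing the norm $g\bar g=\beta_0 f$ and showing the trace $g+\bar g$ is a polynomial in $f$ of degree $\frac{p+1}{2}$ divisible by $f$. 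These are two faces of the same symmetry --- your deck involution $\iota$ is exactly the map $X\mapsto 2\tilde T-X$ underlying the paper's parity argument --- but your version is purely algebraic (no complex uniformization, hence in principle valid over more general base fields), makes the nonvanishing of the coefficient of $w^{p}$ explicit, and isolates the divisor bookkeeping cleanly; the paper's analytic version avoids having to set up the quadratic function-field extension. Your closing remarks (restriction to odd $p$, and the passage from a $\overline{K}$-linear to a $K$-linear relation via the fact that all functions lie in the $K$-vector space $K(H_S^d)$) are both needed and correctly handled; note the paper leaves the latter point implicit.
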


\begin{proof}
    Let $\varphi$ be the $p$-isogeny of the elliptic curve $E$ with the kernel $\ker{(\varphi)}=\left<S\right>$. Denote by $\tilde{E}$ the image of $\varphi$, $\tilde{T}=\varphi(T)$ and $\tilde{O}=\varphi(O)$. 
    
    Consider the functions $f_1$, $f_2$ and $h \in K(\tilde{E})$ with the following divisors:
    \begin{align*}
        \div{(f_1)} &= p(\tilde{T})-p(\tilde{O}),\\
        \div{(f_2)} &= p(2\tilde{T})-p(\tilde{O}),\\
        \div{(h)} &= (2\tilde{T})+(\tilde{O})-2(\tilde{T}).
    \end{align*}
    Note that the functions $f_1$, $f_2$ and $h$ are  for now only well defined up to multiplication by a constant. For a suitable choice of this constants we can define functions $f_1$, $f_2$ and $h$ such that $\varphi^*(f_1)=z^p$, $\varphi^*(hf_1)=z^{p-2}w$, \ldots, $\varphi^*(h^{\frac{p-1}{2}}f_1)=zw^{\frac{p-1}{2}}$ and $\varphi^*(f_2)=w^p$ holds.
    Thus, in order to show that $z^p, z^{p-2}w,\, \dotso ,\, zw^{\frac{p-1}{2}}, w^p, 1$ are $K$-linearly dependent it is enough to show that $f_1, hf_1,\, \dotso,\ h^{\frac{p-1}{2}}f_1, f_2,1$ are $K$-linearly dependent. 
    
    In order to do so, we will show that there exist a constant $\lambda \in K$ and a polynomial $P \in K[t]$ of degree at most $\frac{p-1}{2}$ such that
    \[
        f_2=\lambda+f_1 P(h).
    \] 
    Set $\lambda=f_2(\tilde{T})$ and consider the function $f=\dfrac{f_2-\lambda}{f_1}$. Since $K$ is and extension of the field $\Q$, we can view the elliptic curve $\tilde{E}$ over $\C$ as a torus $\C/\Lambda$ for some lattice $\Lambda$. According to Proposition 3.4 in Chapter VI in~\cite{silverman2009arithmetic} this allows us to write the functions $f_1$, $f_2$ and $h$ using the Weierstrass $\sigma$-function
    \begin{align*}
        f_1(z) &= C_1\frac{\sigma(z-z_T)^p}{\sigma(z)^p},\\
        f_2(z) &= C_2 \frac{\sigma(z-2z_T)^p}{\sigma(z)^p},\\
        h(z) &= C \frac{\sigma(z-2z_T)\sigma(z)}{\sigma(z-z_T)^2}
    \end{align*}
    where $C_1$, $C_2$ and $C \in K$ are constants and $z_T$ corresponds to point $\tilde{T}$ when viewed in $\C/\Lambda$.
    Then we have that $\lambda=f_2(z_T)=-C_2$ and 
    \[
        f(z)=\frac{f_2(z)-\lambda}{f_1(z)}=\frac{C_2}{C_1}\cdot \frac{\sigma(z-2z_T)^p+\sigma(z)^p}{\sigma(z-z_T)^p}.
    \]
    
    By using the fact that the Weierstrass $\sigma$-function is an odd function we get that the functions $f$ and $h$ are even functions around the point $z_T$ 
    \[
        f(z+z_T)=\frac{C_2}{C_1}\cdot\frac{\sigma(z-z_T)^p+\sigma(z+z_T)^p}{\sigma(z)^p}=f(z_T-z),
    \]
    \[
        h(z+z_T)=C\frac{\sigma(z-z_T)\sigma(z+z_Z)}{\sigma(z)^2}=h(z_T-z).
    \]
    
    Since $h(z)$ is an even function around $z_T$ and has a pole in $\tilde{T}$ of order $2$, its Laurent series around $z_T$ is of the form 
    \[
        h(z)=\sum_{k \geq -1} b_k(z-z_T)^{2k}
    \]
    for some coefficients $b_k \in K$ for all integers $k\geq -1$.
    
    The function $f(z)$ is also even around $z_T$ and has a pole in $\tilde{T}$ of order at most $p-1$ since $f_1$ has a zero in $\tilde{T}$ of order $p$ and $f_2-\lambda$ has a zero in $\tilde{T}$. Thus, the Laurent series of $f(z)$ around $z_T$ is of the form
    \[
        f(z)=\sum_{k \geq -\frac{p-1}{2}} a_k(z-z_T)^{2k}
    \]
    for some coefficients $a_k \in K$ for all integers $k\geq -\frac{p-1}{2}$.
    
    Finally, by using the pole in the point $\tilde{T}$ of order $2$ of the function $h$, we can define a polynomial $P\in K[t]$ of degree at most $\frac{p-1}{2}$ such that it removes the singular part in the Laurent series of $f(z)$, i.e.\ such that we have
    \[
        f(z)-P(h(z))=\sum_{k\geq 0} c_k(z-z_T)^{2k}
    \]
    for some coefficients $c_k \in K$ for all integers $k\geq 0$.

    Notice that the functions $h$ and $f$ have only a pole in the point $\tilde{T}$. Therefore, the only pole of the function $f(z)-P(h(z))$ can eventually only be in the point $\tilde{T}$. But since $f(z)-P(h(z))=\displaystyle{\sum_{k\geq 0} c_k(z-z_T)^{2k}}$, we know that $f(z)-P(h(z))$ does not have a pole in $\tilde{T}$. Thus, the function $f(z)-P(h(z))$ has no poles and hence it must be a constant function. By translating the polynomial $P$ by a suitable constant, we can assume that ${f(z)-P(h(z))=0}$ and from there we get that $f_2=\lambda+f_1 P(h)$ as desired.
\end{proof}

\begin{remark}
    Trying to apply the standard trick using Riemann-Roch Theorem on the Riemann-Roch spaces $\mathcal{L}(nD)$ for the divisor 
    \[
        D = (O)+(S)-\dotso +((p-1)S)
    \]
    and positive integers $n$ does not give us the desired equation for the twist $H_S^d$. 
    
    Namely, we have that all the functions of the form $z^\alpha w^\beta$ for nonnegative integers $\alpha$ and $\beta$ such that $\alpha+\beta \leq n$ are all in the Riemann-Roch space $\mathcal{L}(nD)$. Thus, in order to obtain that the set of function $\{z^\alpha w^\beta\,:\, \alpha + \beta \leq n,\, \alpha, \beta \in \Z_{\geq 0}\}$ is $K$-linearly dependent we need to choose an integer $n$ such that the following inequality holds
    \[
        \binom{n+2}{2} = \#\{z^\alpha w^\beta\,:\, \alpha + \beta \leq n,\, \alpha, \beta \in \Z_{\geq 0}\} > \dim \mathcal{L}(nD) =np.
    \]
    But, the first such integer $n$ for which the above inequality holds is 
    \[
        n = \left\lfloor\dfrac{2p-1+\sqrt{4p^2-12p+1}}{2}\right\rfloor >p.
    \]
\end{remark}

With this we are now finally ready to prove the Theorem~\ref{p-twist_model}, i.e.\ that the functions $z$ and $w$ generate the function field $K(H_S^d)$.

\begin{proof}[Proof of Theorem~\ref{p-twist_model}]
    By Proposition~\ref{p-twist_dep} we already know that there exist constants \newline
    ${\alpha_{1}, \alpha_2, \dotso, \alpha_{\frac{p+1}{2}}, \beta, \gamma \in K}$ not all zero such that $\alpha_{1}z^p+\alpha_2z^{p-2}w+\dotso+\alpha_{\frac{p+1}{2}}zw^{\frac{p-1}{2}}+\beta w^p+\gamma=0$.
    Let us denote by $C$ the curve given with the above equation, i.e.\ 
    \[
        C\,:\, \alpha_{1}z^p+\alpha_2z^{p-2}w+\dotso+\alpha_{\frac{p+1}{2}}zw^{\frac{p-1}{2}}+\beta w^p+\gamma=0.
    \]
    
    First we will show that the curve $C$ is isomorphic to the starting elliptic curve $E$. In order to do so, consider the map $\varphi:E \to C$ defined by $\varphi(x,y)=(z,w)$. We claim that $\deg{(\varphi)}=1$ and from there it follows that $\varphi$ is the desired isomorphisam between $E$ and $C$.

    Indeed, let $(z_0,w_0)\in C$ be a generic point on $C$. The equation $z(P)=z_0$ has exactly $\deg{(z)}=p$ solutions in $E$. Furthermore, since the degree of the curve $C$ is $p$, we know that there are also exactly $p$ points on $C$ with the first coordinate equal to $z_0$. As this is equal to the number of solutions of the equation $z(P)=z_0$, it follows that $\deg{(\varphi)}=1$.

    On the other hand, we also have that $H_S^d$ is isomorphic to $E$ since it is a twist of $E$. Therefore, we have that $H_S^d$ is isomorphic to $C$ and that $\overline{K}(z,w)=\overline{K}(C)=\overline{K}(H_S^d)$. Now by taking the fixed field by the twisted action of the $\GalK$ by the cocycle $\xi_d$ and using that $z$ and $w$ are fixed by this action we finally get $K(z,w)=K(H_S^d)$. This means that the curve $C$ is a model for the twist $H_S^d$ and we can write
    \[
        H_S^d \,:\, \alpha_{1}z^p+\alpha_2z^{p-2}w+\dotso+\alpha_{\frac{p+1}{2}}zw^{\frac{p-1}{2}}+\beta w^p+\gamma=0.
    \]
\end{proof}

Next, we are going to prove Theorem~\ref{thm_Hd_ELS_1} about the local solubility of the twist $H_S^d$. In order to do that we will need the following lemmas.

The first Lemma is due to Cassels and it describes the image of the connecting homomorphisam $\delta$ in the Kummer sequence for primes $v$ of good reduction (see~\cite{MR179169}, Lemma 4.1).
\begin{lemma}[Cassels]\label{lema_cassels}
    Let $E$ be an elliptic curve over the number field $K$, $v \in M_K$ a prime, $p$ a rational prime and $\delta$ the connecting homomorphisam in the Kummer sequence
    \[
        \begin{tikzcd}
            0 \arrow[r] & E(K_v)/pE[K_v] \arrow[r, "\delta"] & H^1(K_v,E[p]) \arrow[r] & H^1(K_v,E)[p] \arrow[r] & 0.
        \end{tikzcd}    
    \]
    If $v$ is a prime of good reduction and $v \nmid p\cdot \infty$ them
    \[
        H_f^1(K_v, E[p]):=\im{\delta}=H^1(K_v^{\operatorname{ur}}/K_v, E[p]).
    \]
    where $K_v^{\operatorname{ur}}$ is the maximal unramified extension of $K_v$.
\end{lemma}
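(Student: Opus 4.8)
The plan is to prove the two inclusions separately, the non-obvious one being $\im\delta \subseteq H^1(K_v^{\mathrm{ur}}/K_v, E[p])$ --- here the inflation map $H^1(K_v^{\mathrm{ur}}/K_v, E[p]) \to H^1(K_v, E[p])$ is injective, by the start of the inflation--restriction sequence, so its source is legitimately regarded as a subgroup of its target --- and then to upgrade that inclusion to an equality by a cardinality count. Throughout I would fix the residue field $k_v$ of $K_v$, write $I_v \subseteq \GalKv$ for the inertia subgroup, $\Frob_v$ for a Frobenius element, and $\widetilde E$ for the reduction of $E$ over $k_v$. Since $E$ has good reduction at $v$ and $v \nmid p$, the criterion of N\'eron--Ogg--Shafarevich shows that $I_v$ acts trivially on $E[p]$, so $E[p]$ is an unramified $\GalKv$-module and $H^1(K_v^{\mathrm{ur}}/K_v, E[p]) = H^1(\widehat{\Z}, E[p])$, with $\widehat{\Z} \cong \Gal(K_v^{\mathrm{ur}}/K_v)$ acting through $\Frob_v$.

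For the inclusion: a class in $\im\delta$ is represented by a Kummer cocycle $\sigma \mapsto \sigma(Q) - Q$, where $P \in E(K_v)$ and $[p]Q = P$ for some $Q \in E(\overline{K_v})$. The key point is that in the present setting $[p]$ extends to a finite \emph{\'etale} isogeny of the N\'eron models over $\OO_v$ (equivalently, the formal group contributes nothing to the $p$-torsion because $v \nmid p$), so every solution of $[p]Q = P$ is defined over an unramified extension of $K_v$. Hence $\sigma(Q) = Q$ for $\sigma \in I_v$, the cocycle is trivial on $I_v$, and therefore is inflated from $\Gal(K_v^{\mathrm{ur}}/K_v)$; this gives $\im\delta \subseteq H^1(K_v^{\mathrm{ur}}/K_v, E[p])$.

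For the count: on the cohomology side, $H^1(\widehat{\Z}, E[p]) \cong E[p]/(\Frob_v - 1)E[p]$, which for the finite group $E[p]$ has the same order as $E[p]^{\Frob_v}$. On the Mordell--Weil side, exactness of the Kummer sequence gives $\#\im\delta = \#\bigl(E(K_v)/pE(K_v)\bigr)$; applying the snake lemma to multiplication by $p$ on $0 \to E_1(K_v) \to E(K_v) \to \widetilde E(k_v) \to 0$, and using that the formal group $E_1(K_v)$ is pro-$\ell$ for $\ell = \operatorname{char} k_v \neq p$ and hence uniquely $p$-divisible, yields $E(K_v)/pE(K_v) \cong \widetilde E(k_v)/p\widetilde E(k_v)$, a finite group of the same size as $\widetilde E(k_v)[p]$. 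Finally the reduction isomorphism $E[p] \xrightarrow{\sim} \widetilde E[p]$ is $\GalKv$-equivariant, so $\widetilde E(k_v)[p] = E[p]^{\Frob_v}$. Comparing the two sides, $\#\im\delta = \#E[p]^{\Frob_v} = \#H^1(K_v^{\mathrm{ur}}/K_v, E[p])$, and together with the inclusion this forces equality.

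The step I expect to be the main obstacle is justifying that preimages under $[p]$ are unramified: it is tempting to quote only that $E[p]$ is an unramified Galois module, but one really needs the stronger fact that $[p]\colon E \to E$ is \'etale at $v$ (equivalently that $E_1(K_v)$ is $p$-divisible), which is where the hypotheses "good reduction" and $v \nmid p$ are genuinely used. An alternative that sidesteps the explicit counting is local Tate duality: via the Weil pairing $E[p]$ is self-dual, $\im\delta$ is its own annihilator under the induced cup-product pairing on $H^1(K_v, E[p])$, and so is the unramified subgroup when $v \nmid p$; two maximal isotropic subspaces, one containing the other, must coincide. I would mention this as a remark but carry out the elementary argument above.
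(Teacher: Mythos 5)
Your argument is correct, but note that the paper does not prove this lemma at all: it is quoted as Lemma~4.1 of Cassels's paper, so there is no in-text proof to compare against. What you have written is the standard textbook derivation, and both halves check out. The inclusion $\im\delta \subseteq H^1(K_v^{\operatorname{ur}}/K_v, E[p])$ is correctly reduced to showing that any $Q$ with $[p]Q = P \in E(K_v)$ is fixed by inertia; besides the \'etale-isogeny argument you give, one can see this more elementarily by noting that for $\sigma$ in inertia the point $\sigma(Q)-Q$ lies in $E[p]$ and reduces to $O$ in $\widetilde{E}(\overline{k_v})$, while reduction is injective on $E[p]$ because $v \nmid p$ and the reduction is good. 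The cardinality count is also sound: $\#\im\delta = \#\bigl(E(K_v)/pE(K_v)\bigr) = \#\widetilde{E}(k_v)[p] = \#E[p]^{\Frob_v} = \#H^1(\widehat{\Z},E[p])$, where the middle equalities use the unique $p$-divisibility of the formal group (again from $v\nmid p$) and the kernel--cokernel count for $\Frob_v - 1$ on the finite module $E[p]$. Your remark about Tate duality gives a second valid route. The only caveat is presentational: since the paper treats this as a black-box citation, a full proof is not needed for the paper's purposes, but yours would serve as a self-contained substitute.
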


The next Lemma is a small modification of Lemma 2.1 in \cite{ciperiani2015local} and describes a connection to the existence of $K_v$-rational points on $H_S^d$ and the restriction of the cocycle $\xi_d$ to $\GalKv$ for all primes $v$.
\begin{lemma}\label{lema_1}
    Let $v \in M_K$ be a prime and $\xi_d^v$ the restriction of the cocylce $\xi_d$ to $\GalKv$. With the notation as above we have that $H_S^d(K_v) \neq \emptyset$ if and only if the $\{\xi_d^v\}=0$ in $H^1(K_v, E)$.
\end{lemma}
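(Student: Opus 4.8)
The plan is to invoke the standard correspondence between twists of $E$ whose defining cocycle takes values in the translation subgroup $E(\overline{K})\trianglelefteq\operatorname{Isom}(E)$ and torsors under $E$, i.e.\ elements of the Weil--Ch\^atelet group $H^1(K_v,E)=H^1(\GalKv,E(\overline{K_v}))$; the argument is, as the reference to \cite{ciperiani2015local} suggests, a direct adaptation of the torsor picture. By construction of the twist there is an isomorphism $\phi\colon H_S^d\to E$ defined over $\overline{K}$, hence over $\overline{K_v}$ after base change, for which $\phi^\sigma\circ\phi^{-1}$ is translation by $\xi_d(\sigma)$ on $E$ for every $\sigma\in\GalK$ --- this is the convention used throughout the paper (cf.\ the proof of Proposition~\ref{3-twist_model}). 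The feature that makes everything clean is that $\xi_d$ takes its values in $E(K)[p]$: these are fixed by $\GalKv$, so the cocycle identity for $\xi_d$ reduces to additivity, $\xi_d^v$ is a genuine $1$-cocycle on $\GalKv$ valued in $E(\overline{K_v})$, and its class $\{\xi_d^v\}\in H^1(K_v,E)$ is well defined.

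First I would prove the forward implication. Suppose $Q\in H_S^d(K_v)$, so $Q^\sigma=Q$ for all $\sigma\in\GalKv$, and set $P=\phi(Q)\in E(\overline{K_v})$. Then for every $\sigma\in\GalKv$,
\[
    P^\sigma=\phi^\sigma(Q^\sigma)=\phi^\sigma(Q)=\bigl(\phi^\sigma\circ\phi^{-1}\bigr)(P)=P+\xi_d^v(\sigma),
\]
so $\xi_d^v(\sigma)=P^\sigma-P$ is a coboundary and $\{\xi_d^v\}=0$ in $H^1(K_v,E)$.

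For the converse, suppose $\{\xi_d^v\}=0$, so there is $P\in E(\overline{K_v})$ with $\xi_d^v(\sigma)=P^\sigma-P$ for all $\sigma\in\GalKv$, and set $Q=\phi^{-1}(P)\in H_S^d(\overline{K_v})$. Applying $\sigma$ to $\phi\circ\phi^{-1}=\mathrm{id}$ gives $(\phi^{-1})^\sigma=(\phi^\sigma)^{-1}$, and from $\phi^\sigma=\tau_{\xi_d(\sigma)}\circ\phi$ we get $(\phi^\sigma)^{-1}(R)=\phi^{-1}\bigl(R-\xi_d(\sigma)\bigr)$. Hence, for $\sigma\in\GalKv$,
\[
    Q^\sigma=(\phi^{-1})^\sigma(P^\sigma)=(\phi^\sigma)^{-1}(P^\sigma)=\phi^{-1}\bigl(P^\sigma-\xi_d(\sigma)\bigr)=\phi^{-1}\bigl(P+\xi_d^v(\sigma)-\xi_d(\sigma)\bigr)=\phi^{-1}(P)=Q,
\]
where I used $\xi_d^v(\sigma)=\xi_d(\sigma)$ for $\sigma\in\GalKv$. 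Thus $Q\in H_S^d(K_v)$ and $H_S^d(K_v)\neq\emptyset$.

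The proof is essentially bookkeeping, and the one delicate point is fixing the sign convention for the cocycle attached to a twist --- whether one uses $\phi^\sigma\circ\phi^{-1}$ or its inverse, equivalently whether $H_S^d$ represents $\{\xi_d\}$ or $-\{\xi_d\}$ in $H^1(K_v,E)$. This sign is irrelevant for the statement ``$\{\xi_d^v\}=0$'', but it should be pinned down once and for all consistently with Section~\ref{sec:2}, together with the (immediate) observation that an isomorphism $\phi$ with the stated property exists over $\overline{K_v}$ by the general theory of twists in \cite{silverman2009arithmetic} (Chapter X.2) applied over the local field. No reduction-theoretic input is needed here; Lemma~\ref{lema_cassels} only enters afterwards, when one determines for which primes $v$ the local class $\{\xi_d^v\}$ actually vanishes.
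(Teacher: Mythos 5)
Your proof is correct and follows essentially the same route as the paper: the paper identifies $H_S^d(K_v)$ with the set of points $R\in E$ fixed by the twisted Galois action, i.e.\ satisfying $\xi_d^v(\sigma)=R^\sigma-R$, and reads off that this set is nonempty exactly when $\{\xi_d^v\}$ is a coboundary; your argument just unpacks that correspondence explicitly via the isomorphism $\phi$ and checks both directions. The extra care you take with the sign convention and with $(\phi^{-1})^\sigma=(\phi^\sigma)^{-1}$ is sound but not a different method.
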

\begin{proof}
    $K_v$-rational points on $H_S^d$ correspond to the points on $E$ that are fixed by the twisted action of the Galois group $\GalKv$. Therefore, we have that
    \begin{align*}
        H_S^d(K_v) &\cong \left\{ R\in E\, :\, R^\sigma = \xi_d^v(\sigma)R \quad \forall \sigma \in \GalKv \right\}\\
        &= \left\{ R\in E\, :\, \xi_d^v(\sigma) = R^\sigma - R \quad \forall \sigma \in \GalKv \right\}.
    \end{align*}
\end{proof}

\begin{proof}[Proof of Theorem~\ref{thm_Hd_ELS_1}]
    Suppose that $H_S^d(K_v)\neq \emptyset$ for such prime $v$. Then by Lemma~\ref{lema_1} we have that $\left\{\xi_d^v\right\}= 0$. From the Kummer sequence 
    \[
        \begin{tikzcd}
            0 \arrow[r] & E(K_v)/pE[K_v] \arrow[r, "\delta"] & H^1(K_v,E[p]) \arrow[r] & H^1(K_v,E)[p] \arrow[r] & 0.
        \end{tikzcd}    
    \]
    it follows then that $\left\{\xi_d^v\right\} \in \im{\delta} = H_f^1(K_v, E[p])$.
    By Lemma~\ref{lema_cassels} we have that $ H_f^1(K_v, E[p])=H^1(K_v^{\operatorname{ur}}/K_v, E[p])$ so we have in fact that $\left\{\xi_d^v\right\} \in H^1(K_v^{\operatorname{ur}}/K_v, E[p])$. But, since $v \mid d$, we have that $\sqrt[p]{d}$ is ramified in $\overline{K_v}$ and because of that $\left\{\xi_d^v\right\}$ is not unramified which leads to a contradiction.
    Therefore, we have that $H_S^d(K_v) = \emptyset$.

    Let us assume now that $H_S^d$ is ELS. From the first part of the proof we know that the only prime divisors of $d$ are the rational primes above the primes of bad reduction of $E$ and eventually the the rational prime $p$. Thus, $d$ has only a finite set of possible rational prime divisors. Sine $d$ is also $p$-power-free it follows that there are only finitely many possible $d$'s for which $H_S^d$ is ELS.  
\end{proof}


\section{Future work}

The paper has left lots of interesting questions for future projects:

\begin{itemize}
    \item Can we classify all the integers $d$ for which the twist $H_S^d$ is ELS in the case $p=2$, $p=3$ or even in the general case?

    \item Can we say something more about the model 
        \[
            H_S^d\,:\, \alpha_{1}z^p+\alpha_2z^{p-2}w+\dotso+\alpha_{\frac{p+1}{2}}zw^{\frac{p-1}{2}}+\beta w^p+\gamma=0 ?
        \]
        What are the singular points of the above model? Is there a connection between the coefficients $\alpha_{1}$, \ldots, $\alpha_{\frac{p+1}{2}}$, $\beta$ and $\gamma$ and the starting elliptic curve $E$, or perhaps even a formula on how to compute them?
    \end{itemize}

\section*{Acknowledgements}

The author was supported by the project “Implementation of cutting-edge research and its application as part of the Scientific Center of Excellence for Quantum and Complex Systems, and Representations of Lie Algebras“, Grant No. PK.1.1.10.0004, co-financed by the European Union through the European Regional Development Fund - Competitiveness and Cohesion Programme 2021-2027 and by the the Croatian Science Foundation under the project no.\ IP-2022-10-5008.

The author would like to thank his mentor Matija Kazalicki on many constructive discussions about the problems in this paper and on numerous helpful ideas and approaches on how to prove some of the results.
    
\bibliographystyle{alpha}
\bibliography{bibliography}

\end{document}